\documentclass[reqno, a4paper, 10pt]{amsart}  
\usepackage[utf8]{inputenc}
\usepackage{epsfig} 
\usepackage{amsmath}
\usepackage{amssymb}
\usepackage{amsthm} 
\usepackage{accents}
\usepackage{url}
\usepackage{mathrsfs}
\usepackage{nicefrac}

\usepackage{moreverb}
\usepackage[colorlinks,bookmarksopen,bookmarksnumbered,citecolor=red,urlcolor=red]{hyperref}

\newcommand{\Ucal}{\mathcal{U}}

\newcommand{\Ycal}{\mathcal{Y}}

\newcommand{\Xcal}{\mathcal{X}}
\newcommand{\uc}{\bar{c}}
\newcommand{\lc}{\underbar{c}}

\renewcommand{\:}{\mathcal{\colon}}
\newcommand{\NN}{\mathbb{N}}

\newcommand{\RR}{\mathbb{R}}


\theoremstyle{plain}
\newtheorem{theorem}{Theorem}
\newtheorem{proposition}[theorem]{Proposition}

\newtheorem{lemma}[theorem]{Lemma}
\newtheorem{hypothesis}[theorem]{Hypothesis}
\theoremstyle{remark}
\newtheorem{remark}[theorem]{Remark}


\title[Relaxation for Hyperbolic PDE Mixed-Integer Optimal Control]{Relaxation Methods for Hyperbolic PDE Mixed-Integer Optimal Control Problems}

\author{Falk M. Hante$^\dag$}

\date{September 14, 2015}

\keywords{PDE-Constrained Optimization; Mixed-Integer Optimal Control; Hyperbolic PDEs}

\thanks{$^\dag$Falk M. Hante, Department Mathematik, Universität Erlangen-Nürnberg, Cauerstr. 11, 91058 Erlangen, Germany. E-mail: \url{falk.hante@fau.de}}

\begin{document}

\begin{abstract} 
We extend the convergence analysis for methods solving PDE-constrained 
optimal control problems containing both discrete and continuous control decisions 
based on relaxation and rounding strategies to the class of first order semilinear
hyperbolic systems in one space dimension. The results are obtained by novel a-priori 
estimates for the size of the relaxation gap based on the characteristic flow,
fixed-point arguments and particular regularity theory for such mixed-integer 
control problems. As an application we consider a relaxation model for optimal 
flux switching control in conservation laws motivated by traffic flow problems.
\end{abstract}

\maketitle

\section{Introduction}
Decision taking, e.\,g., specifying \textrm{on}/\textrm{off} for some fixed actuator on a plant, 
is a very elementary control mechanism. Critical infrastructure systems such as
gas pipeline networks, water canal networks or highway traffic networks are for instance 
controlled by switching valves \cite{PfetschEtAl2015}, weirs \cite{LamareGirardPrieur2015} or 
speed limit signs \cite{HeygiEtAl2005}, respectively. These decision are possibly to be taken 
in combination with determining additional continuously variable parameters such as the outlet 
pressure of a compressor in the gas network example. Further modeling aspects of such problems
are discussed in \cite{HLS2009}. We will refer to the open-loop optimization 
of such heterogeneous controls as a \emph{mixed-integer optimal control problem}. In the literature, 
such problems are also called \emph{discrete-continuous}, \emph{hybrid} or \emph{switching} 
\emph{optimal control problems}.  

A method for solving such problems numerically should yield integer feasible solutions
based on sufficiently accurate approximations of the plant's dynamics. Due to the combinatorial
complexity this is a difficult problem in general. In particular, it becomes a real challenge 
when the plant is modeled using partial differential equations (PDEs) as it is expedient for 
example in the mentioned infrastructure systems in nonstationary scenarios on large scale
networks. Thus, several different solution approaches have already been proposed in the literature.
They can be classified as total-discretization-, reformulation- and direct relaxation-based and
each of them have there assets and drawbacks.

Total discretization of the underlying dynamical system obviously leads to mixed-integer 
nonlinear programs (MINLPs). These become typically large, in particular in the PDE case. 
Hence, solving such MINLPs requires structure exploiting algorithms. If they are available,
such methods can provide global optimal solutions. This approach is followed for example 
in \cite{GeisslerMorsiSchewe2013} for gas networks or in \cite{GoettlichHertyZiegler2015} 
for traffic flow.

Reformulations turn the mixed-integer optimal control problem into---on some level---equivalent 
formulations which can then be solved with existing numerical methods.
For example, a variable time transformation method yields a continuous 
formulation \cite{LeeTeoRehbockJennings1999,Gerdts2006}. This approach is yet limited to problems 
governed by ordinary differential equations (ODEs). Optimal switching controls can in some 
cases be sophisticatedly characterized using the viscosity solution of a Hamilton-Jacobi-Bellman system, 
see \cite{CapuzzoDolcettaEvans1984} for ODE problems and \cite{Yong1989}
for a generalization to PDEs. However, in particular in the PDE case, the 
Hamilton-Jacobi-Bellman system is again typically large and numerically a difficult problem.
Complementarity-based reformulations are used in \cite{BaumruckerBiegler2009} and 
have been applied to gas network optimization in \cite{BaumruckerBiegler2010}.
The resulting system then require special nonlinear solvers, or additional relaxation techniques.

Direct relaxation of the decision variables obviously turns the problem into a fully continuous setting.
The resulting problem can then be solved with well-established nonlinear optimal control 
techniques and integer solutions can be recovered from rounding. For ODE problems, it has been
shown that the relaxation gap, i.\,e., the distance of the optimal value for the relaxed problem 
and the one corresponding to the rounded solution, can be made arbitrary small using relaxation for
a convexified problem together with a constructive rounding strategy \cite{SagerBockDiehl2012}. 
The so obtained epsilon-optimal controls may involve frequent switching, but suboptimal solutions
can still be obtained including combinatorial constraints limiting for example the number of switches
\cite{SagerJungKirches2011}. The approach has also been generalized to dealing with vanishing 
constraints \cite{JungKirchesSager2013} and to mixed-integer control problems for abstract semilinear 
evolutions on Banach spaces using semigroup theory \cite{HanteSager2013}. 
The generalization to Banach spaces covers semilinear PDE cases, but the analysis in \cite{HanteSager2013} 
assumes a certain degree of smoothness of the solution. Due to the inherent discontinuities of the 
integer control interfering with the solution, these regularity assumptions are only known to be valid 
in parabolic cases due to the natural smoothing properties of ellipic operators (examples are 
discussed \cite{HanteSager2013}). In particular, these regularity assumptions are known to fail for
classical solutions and are in general very difficult to be verified for weak solutions 
in hyperbolic cases. Unfortunately, all examples mentioned at the beginning naturally involve 
hyperbolic PDEs which are at present not supported by the available theory for this approach.

The main contribution of this paper is to extend the analysis for the direct relaxation approach to semilinear
hyperbolic PDEs with distributed mixed-integer control. Instead of employing semigroup theory on classical Sobolev spaces 
as in \cite{HanteSager2013}, we obtain novel a-priori estimates on the size of the relaxation gap using the 
method of characteristics and particular regularity results on semi-classical Sobolev spaces. This is an 
important step to establish relaxation techniques for optimization of the mentioned infrastructure systems. 
For instance, transient gas network operation can often be accurately modeled using semilinear 
Euler-equations \cite{BandaHerty2008}. For an application of the results obtained in this paper to gas-network 
optimization, valve switching is then to be modeled as a distributed control on very small pipe sections. 
This is intended as future work. In this paper, we will discuss the application of the method to a relaxation 
model of nonlinear conservation laws motivated by traffic flow control. 

The paper is organized as follows. In Section~\ref{sec:problem} we give a detailed problem formulation for 
a semilinear hyperbolic mixed-integer optimal control problem and relate it to a relaxed and convexified problem.
In Section~\ref{sec:estimates} we estimate the gap made by this approach in terms of the integrated difference 
of two controls. The latter quantity is not a norm on the respective control spaces which make 
the estimates technically difficult. The estimates yield a convergence result for the relaxation method in 
Proposition~\ref{prop:relgap}. In Section~\ref{sec:example}, we employ the relaxation approach to 
the already mentioned example motivated by traffic flow, where we use an adjoint-equation based gradient-decent
algorithm to compute solutions to a relaxed problem and we apply rounding strategies in order to verify the
predicted convergence numerically. In Section~\ref{sec:conclusion}, we draw a brief conclusion.

\section{Problem Formulation and the Relaxation Approach}\label{sec:problem}
For some real constants $L,T>0$, a natural number $n$, a diagonal matrix function 
$\Lambda=\text{diag}(\lambda_1,\ldots,\lambda_n)$ with $\lambda_i\: [0,L] \to \RR$, 
$i\in\{1,\ldots,n\}$, normed vector spaces $U$ and $V$ and a nonlinear function 
$f\: \RR^n \times U \times V \to \RR^n$, we consider a controlled system of semilinear 
hyperbolic PDEs in two variables $t$ (time) and $x$ (a single space variable)
\begin{equation}\label{eq:SysPDE}
  y_t + \Lambda(x) y_x = f(y,u(t),v(t))\quad\text{on}~(0,T) \times (0,L)
\end{equation}
for an unkown vector function $y=(y_1,\ldots,y_n)^\top \: (0,T) \times (0,L) \to \RR^n$ and
two controls $u\: [0,T] \to U$ and $v\: [0,T] \to V$. We assume that, for some $r \in \{1,\ldots,n\}$,
$\lambda_i<0$ for $i\in\{1,\ldots,r\}$ and $\lambda_i>0$ for $i=\{r+1,\ldots,n\}$
and set $y^-=(y_1,\ldots,y_r)^\top$, $y^+=(y_{r+1},\ldots,y_n)^\top$, so that 
$y=(y^-,y^+)^\top$. Further, we consider \eqref{eq:SysPDE} subject to boundary conditions
\begin{equation}\label{eq:SysBC}
\begin{pmatrix}y^-(t,L)\\y^+(t,0)\end{pmatrix} = \begin{pmatrix}G_{--} & G_{-+}\\ G_{+-} & G_{++} \end{pmatrix} \begin{pmatrix}y^-(t,0)\\y^+(t,L)\end{pmatrix} + \begin{pmatrix} d^-(t) \\ d^+(t) \end{pmatrix},\quad \text{on}~(0,T)
\end{equation}
and an initial condition
\begin{equation}\label{eq:SysIC}
 y(0,x)=\bar{y}(x)\quad \text{on}~[0,L],
\end{equation}
where we assume that $G_{--} \in \RR^{r \times r}$, $G_{-+} \in \RR^{r \times (n-r)}$, $G_{+-} \in \RR^{(n-r) \times r}$,
$G_{++} \in \RR^{(n-r) \times (n-r)}$, $d^-\: [0,T] \to \RR^r$, $d^+\: [0,T] \to \RR^{(n-r)}$, and 
$\bar{y}\: [0,L] \to \RR^n$. The diagonal form in \eqref{eq:SysPDE} is without loss of generality, because it can be achieved by a change of 
variables for many physical systems, see e.\,g., \cite{CourantHilbert1962,Bressan2000}.

Our interest will be to find controls $u$ and $v$ (piecewise smooth and piecewise constant, respectively) 
that minimize a cost criterion
\begin{equation}\label{eq:Cost}
   J(y(T,\cdot))=\int_0^L g(y(T,x))\,dx,
\end{equation}
where $g\: \RR^n \to \RR$ is a non-linear function, subject to the ordinary control constraint that
\begin{equation}\label{eq:Cconstraint}
 u(t) \in \Ucal
\end{equation}
for some closed (and otherwise arbitrary) set $\Ucal \subset U$ and 
the discrete control constraint
\begin{equation}
 v(t) \in \{v^1,v^2,\ldots,v^M\}
\end{equation}
for some given values $v^1,v^2,\ldots,v^M \in V$, $M \in \NN$. 

To this end, we note that for any piecewise smooth $u$ and any piecewise constant $v$, the equation \eqref{eq:SysPDE} 
is equivalent to
\begin{equation}\label{eq:SysPDE2}
  y_t + \Lambda y_x = \sum_{j=1}^M \alpha_j f(y,u,v^j)\quad\text{on}~(0,T) \times (0,L)
\end{equation}
with new controls $\alpha_j=\alpha_j(t) \in \{0,1\}$, $j=1,\ldots,M$, satisfying $\sum_{j=1}^M \alpha_j(t)=1$
for almost every $t$. We will in the following estimate the gap (measured in terms of the norm of $y$) of
weak solutions for \eqref{eq:SysPDE2} (hence solutions to the original problem) and the relaxed problem
\begin{equation}\label{eq:SysPDErelaxed}
  y_t + \Lambda y_x = \sum_{j=1}^M \beta_j f(y,u,v^j)\quad\text{on}~(0,T) \times (0,L)
\end{equation}
with controls $\beta_j=\beta_j(t) \in [0,1]$, $j=1,\ldots,M$, satisfying $\sum_{j=1}^M \beta_j(t)=1$
for almost every $t$. Note that we only replaced the condition ``new controls in $\{0,1\}$'' with
the condition ``new controls in $[0,1]$''. Further, observe that the minimization of \eqref{eq:Cost} subject to
the relaxed problem \eqref{eq:SysPDErelaxed} with the boundary data \eqref{eq:SysBC} and \eqref{eq:SysIC}
and control constraints 
\begin{equation}\label{eq:Cconstraintrelaxed}
 \eqref{eq:Cconstraint},\quad \beta_j(t) \in [0,1],~j=1,\ldots,M\quad\text{and}\quad \sum_{j=1}^M \beta_j(t)=1,\quad t \in (0,T)~\text{a.\,e.}
\end{equation}
is an optimal control problem of a semilinear hyperbolic system in standard form which can be solved efficiently with 
existing numerical methods \cite{HinzeEtAl2009,BandaHerty2009}.

The main motivation for the a-priori estimates derived in Section~\ref{sec:estimates} is the following observation proved 
in \cite{SagerBockDiehl2012}.

\begin{lemma}\label{lem:SUR} Let $\beta\: [0,T] \to [0,1]^M$ be a measurable function such that
$\sum_{j=1}^M \beta_j(t)=1$ for almost every $t$. Then, there exists a piecewise constant function 
$\alpha\: [0,T] \to \{0,1\}^M$ satisfying $\sum_{j=1}^M \alpha_j(t)=1$ for all $t \in [0,T]$ such that
\begin{equation} \label{eq:SURbound}
 \max_{j=1,\ldots,M} \sup_{t \in [0,T]} \left| \int_0^t \beta_j(s) - \alpha_j(s) \,ds \right| \leq (M-1)\Delta t.
\end{equation}
where $\Delta t$ is the length of the largest subinterval where $\alpha$ is taken constant.
\end{lemma}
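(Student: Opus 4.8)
The plan is to prove the lemma constructively via the \emph{Sum-Up Rounding} scheme. First I would fix an arbitrary finite partition $0=\tau_0<\tau_1<\dots<\tau_N=T$ of $[0,T]$ and construct $\alpha$ so that it is constant on each $[\tau_{i-1},\tau_i)$ with a unit-vector value there; this guarantees $\sum_j\alpha_j\equiv 1$ and makes $\alpha$ piecewise constant. Writing $\Delta t_i:=\tau_i-\tau_{i-1}$ and, for the accumulated deviation, $\gamma_{j,i}:=\int_0^{\tau_i}\bigl(\beta_j(s)-\alpha_j(s)\bigr)\,ds$ with $\gamma_{j,0}:=0$, the rounding rule assigns to the $i$-th subinterval the index
\[
  j_i^\ast \in \operatorname*{arg\,max}_{j\in\{1,\dots,M\}}\Bigl(\gamma_{j,i-1}+\textstyle\int_{\tau_{i-1}}^{\tau_i}\beta_j(s)\,ds\Bigr),
\]
ties broken by the smallest index, and sets $\alpha_j\equiv\delta_{j,j_i^\ast}$ there; the recursion $\gamma_{j,i}=\gamma_{j,i-1}+\int_{\tau_{i-1}}^{\tau_i}\beta_j\,ds-\delta_{j,j_i^\ast}\Delta t_i$ makes this well defined, and the largest subinterval on which the resulting $\alpha$ is constant has length $\Delta t\ge\max_k\Delta t_k$.

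Next I would reduce the claim to the grid points. On each $[\tau_{i-1},\tau_i)$ the integrand $\beta_j-\alpha_j$ has a fixed sign: it equals $\beta_j\ge 0$ if $\alpha_j=0$ there and $\beta_j-1\le 0$ if $\alpha_j=1$ there, using $\beta_j(t)\in[0,1]$. Hence $t\mapsto\int_0^t(\beta_j-\alpha_j)$ is monotone on every subinterval, so over $[0,T]$ its modulus attains its maximum at some $\tau_i$. It therefore suffices to show $|\gamma_{j,i}|\le(M-1)\max_k\Delta t_k$ for all $i,j$, since $\max_k\Delta t_k\le\Delta t$.

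The core is an induction on $i$ for the one-sided bound $\gamma_{j,i}\ge-\tfrac{M-1}{M}\max_k\Delta t_k$. Since $\sum_j\beta_j=1$ a.e.\ and $\sum_j\alpha_j\equiv1$, one has the unconditional identity $\sum_j\gamma_{j,i}=0$ for every $i$; together with $\sum_j\int_{\tau_{i-1}}^{\tau_i}\beta_j=\Delta t_i$, the averaging consequence of the $\arg\max$ choice gives $\gamma_{j_i^\ast,i-1}+\int_{\tau_{i-1}}^{\tau_i}\beta_{j_i^\ast}\ge\tfrac1M\Delta t_i$, hence $\gamma_{j_i^\ast,i}\ge-\tfrac{M-1}{M}\Delta t_i$, while for $j\ne j_i^\ast$ one simply has $\gamma_{j,i}=\gamma_{j,i-1}+\int_{\tau_{i-1}}^{\tau_i}\beta_j\ge\gamma_{j,i-1}$, so the inductive hypothesis carries over. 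Using $\sum_\ell\gamma_{\ell,i}=0$ once more, $\gamma_{j,i}=-\sum_{\ell\ne j}\gamma_{\ell,i}\le(M-1)\tfrac{M-1}{M}\max_k\Delta t_k\le(M-1)\max_k\Delta t_k$, which closes the two-sided estimate and, with the reduction step, proves \eqref{eq:SURbound}.

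The part requiring the most care is making this induction close: a naive inductive bound on $|\gamma_{j,i}|$ fails, and one genuinely needs both the averaging bound forced by the $\arg\max$ (to control the \emph{selected} component from below) and the zero-sum identity $\sum_j\gamma_{j,i}=0$ (to convert that one-sided control into the two-sided bound). The other point not to skip is the constant-sign/monotonicity reduction to grid points; without it the continuous supremum only yields the weaker constant $M$ in place of $M-1$.
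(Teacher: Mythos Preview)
The paper does not prove this lemma itself but cites \cite{SagerBockDiehl2012} and notes in the subsequent remark that the proof there is constructive (the Sum-Up Rounding scheme); your proposal reproduces exactly that construction and the accompanying estimate, and the argument---the monotonicity reduction to grid points, the one-sided lower bound via the averaging consequence of the $\arg\max$ rule, and the zero-sum identity $\sum_j\gamma_{j,i}=0$ to close the two-sided bound---is correct.
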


\begin{remark} The proof in \cite{SagerBockDiehl2012} for the existence of $\alpha$ as in the previous Lemma 
is constructive, giving rise to a numerical method that was also explicitly used in \cite{HanteSager2013}. 
Moreover, it has recently been show that the right hand side in \eqref{eq:SURbound} 
can been improved to $\mathcal{O}(\log(M)) \Delta t$ \cite{Kirches2015}.
\end{remark}

We are therefore interested in estimates on the relaxation gap in terms of the left hand side of \eqref{eq:SURbound}.
Note that this quantity is not a norm and that these estimates therefore are neither classical nor obvious. For our
analysis, we will consider solutions of \eqref{eq:SysPDE} or \eqref{eq:SysPDErelaxed}, respectively, subject to the 
boundary conditions \eqref{eq:SysBC} and the initial condition \eqref{eq:SysIC} in the sense
of the forward characteristic flow. Letting $\Omega_T=[0,T] \times [0,L]$ and $s_i(t;\tau,\sigma)$, $i\in\{1,\ldots,n\}$ denote 
the characteristic curves defined by
\begin{equation}
 \frac{d}{dt}s_i = \lambda_i(s_i),\quad s_i(\tau;\tau,\sigma)=(\tau,\sigma),\quad (\tau,\sigma) \in \Omega_T
\end{equation}
the forward characteristic flow of \eqref{eq:SysPDErelaxed} is obtained as a fixed point $y\: \Omega_T \to \RR^n$
of the following integral transformation
\begin{equation}\label{eq:psidef1}
 \psi(y)(\tau,\sigma)=(\psi_1(y)(\tau,\sigma),\ldots,\psi_n(y)(\tau,\sigma))
\end{equation}
with $\psi_i(y)(\tau,\sigma)$ defined recursively as
\begin{equation}\label{eq:psidef2}
 \psi_i(y)(\tau,\sigma) = Y_i(\psi;\tau,\sigma) + \sum_{j=1}^M \int_{t_i^*}^\tau \beta_j(t) f(y(t,s_i(t;\tau,\sigma)),u(t),v^j)\,dt,
\end{equation}
where $t_i^*=t_i^*(\tau,\sigma)$ denotes the intersection time of the curve $s_i(\cdot;\tau,\sigma)$ with the boundary of $\Omega_T$
backward in time and $Y_i(\psi;\tau,\sigma)$ is defined as the $i$-th component of 
\begin{equation}\label{eq:psidef3}
\begin{cases} 
 \begin{pmatrix}
 G_{--} & G_{-+}\\ 
 G_{+-} & G_{++} 
 \end{pmatrix} 
 \begin{pmatrix}
   \psi^-(y)(t_i^*,0)\\
   \psi^+(y)(t_i^*,L)
 \end{pmatrix} + \begin{pmatrix} d^-(t_i^*) \\ d^+(t_i^*) \end{pmatrix}\quad &\text{if $t_i^*>0$},\\
 \bar{y}(t_i^*,s_i(t_i^*;\tau,\sigma))\quad&\text{if $t_i^*=0$},
\end{cases}
\end{equation}
where $\psi^-=(\psi_1,\ldots,\psi_r)^\top$ and $\psi^+=(\psi_{r+1},\ldots,\psi_{n})^\top$. The so defined solution coincides with
the usual weak solution of \eqref{eq:SysPDE} or \eqref{eq:SysPDErelaxed}, respectively, subject to the boundary conditions \eqref{eq:SysBC} 
and the initial condition \eqref{eq:SysIC} in appropriate spaces \cite{HallerHoermann2008}.

\section{A-priori Estimates on the Relaxation Gap}\label{sec:estimates}
The subsequent analysis is based on a particular regularity result for hyperbolic initial boundary value problems obtained 
in \cite{Oberguggenberger1986}, giving a sufficient condition for the solution defined by the forward characteristic flow 
having bounded distributional derivatives along almost every characteristic curve if the initial and boundary data is 
piecewise smooth. To be more specific, for any $p,q,\mu \in \NN$ and any family of disjoint open sets $\Omega_m \subset \RR^p$, $m=1,\ldots,\mu$, 
we will denote by
\begin{equation}
 \bigotimes_{m=1}^\mu W^{1,1}(\Omega_m;\RR^q),\qquad \bigotimes_{m=1}^\mu C^{0}(\Omega_m;\RR^q)
\end{equation}
the set of functions $h$ defined on the closure of $\bigcup_{m=1}^\mu \Omega_m$ with image in $\RR^q$ so that their restriction to $\Omega_m$
belongs to the classical Sobolev space $W^{1,1}(\Omega_m;\RR^q)$ or the Banach space $C^0(\Omega_m;\RR^q)$, respectively.

Then, we make the following assumptions.
\begin{hypothesis}\label{hyp:Reg}
The components of $\Lambda$, $\lambda_i: [0,L] \to \RR$, $i\in\{1,\ldots,n\}$ are Lipschitz-continuous. The functions 
$y \to f(y,u,v^j)$ are smooth and satisfy $f(0,u,v^j)=0$ for all $j=1,\ldots,M$ and $u \in U$. The function 
$u \to f(y,u,v^j)$ is locally Lipschitz-continuous on $U$ for all $j=1,\ldots,M$ and $y \in Y$.
Further, for all $T>0$, there exist finitely many points $0=\tau_0<\tau_1<\ldots<\tau_{K-1}<\tau_K=T$ and 
$0=x_0<x_1<\ldots<x_{\nu-1}<x_\nu=L$ so that the initial and boundary data satisfies 
$\bar{y} \in \bigotimes_{i=1}^\nu W^{1,1}(x_i,x_{i+1};\RR^n)$, $d^- \in \bigotimes_{i=1}^K W^{1,1}(\tau_i,\tau_{i+1};\RR^r)$, 
and $d^+ \in \bigotimes_{i=1}^K W^{1,1}(\tau_i,\tau_{i+1};\RR^{(n-r)})$.
\end{hypothesis}

These assumptions yield the following wellposedness result.

\begin{lemma}\label{lem:L1sol} Under Hypothesis~\ref{hyp:Reg} and for any piecewise smooth control $u\: [0,\infty) \to U$ 
there exist constants $T,K>0$ such that \eqref{eq:SysPDErelaxed} subject to the boundary conditions \eqref{eq:SysBC} 
and the initial condition \eqref{eq:SysIC} admits a unique solution $y$ in $C([0,T];L^\infty(0,L;\RR^n)\cap L^1(0,L;\RR^n))$
for all piecewise smooth controls $\beta\: [0,T] \to [0,1]^M$. In particular, this solution is given as the (unique) 
fixed point of \eqref{eq:psidef1}--\eqref{eq:psidef3} resulting from a strict contraction with contraction constant $\frac12$ 
on $\Ycal = C([0,T];L^1(0,L;\RR^n))$ equipped with the norm
\begin{equation}\label{eq:defDagnorm}
\| y \|_\dag=\sup_{t \in [0,T]} e^{-Kt} \sum_{i=1}^n \int_0^L |y_i(t,x)|dx.
\end{equation}
\end{lemma}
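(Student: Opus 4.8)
The plan is to obtain the solution as the unique fixed point of the integral transformation $\psi$ from \eqref{eq:psidef1}--\eqref{eq:psidef3} by Banach's fixed point theorem on a closed ball of $(\Ycal,\|\cdot\|_\dag)$, after first unwinding the self-referential definition of $\psi$ through the boundary conditions. Since each $\lambda_i$ is Lipschitz and of strict sign on the compact interval $[0,L]$, there are constants $0<\lambda_{\min}\le|\lambda_i(x)|\le\lambda_{\max}<\infty$ and $\Lambda_0=\max_i\mathrm{Lip}(\lambda_i)<\infty$; fixing $T\le L/(2\lambda_{\max})$ guarantees that a characteristic run backwards from any point of $[0,T]\times[0,L]$ reaches either the initial line $\{t=0\}$ or one lateral boundary, and in the latter case that the (at most $n$) characteristics issued backwards from that boundary point reach $\{t=0\}$ without a further reflection. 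Hence for such $T$ the recursion \eqref{eq:psidef2}--\eqref{eq:psidef3} terminates after at most one reflection, and $\psi_i(y)(\tau,\sigma)$ is an explicit finite sum, indexed by the (at most $1+n$) broken characteristic curves $\xi$ joining $(\tau,\sigma)$ to $\{t=0\}$, of (i) the data $\bar y$, $d^\pm$ at the curve's endpoints, each weighted by a product of at most one block of $G$, plus (ii) integrals $\sum_j\int\beta_j f(y(t,\xi(t)),u(t),v^j)\,dt$ along the segments of the curves, weighted likewise. I would record this bookkeeping first, noting also that these integrals make sense as soon as $y$ is essentially bounded, by Fubini's theorem in characteristic coordinates; for larger $T$ one repeats the argument with finitely many reflections.

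Second, I would establish the a priori $L^\infty$ bound. As $f(\cdot,u,v^j)$ is smooth with $f(0,u,v^j)=0$, it satisfies $|f(y,u,v^j)|\le C_R|y|$ together with a Lipschitz estimate with constant $C_R$ on $\{|y|\le R\}$, where by Hypothesis~\ref{hyp:Reg} (and compactness of the range of the given control $u$ on $[0,T]$) $C_R$ may be taken uniform in $u$ and in $j=1,\dots,M$; moreover $\bar y,d^\pm$ are bounded because one-dimensional $W^{1,1}$ functions are. Substituting these into the unwound formula for $\psi$ and choosing first $R$ large, of the order of $\|\bar y\|_{L^\infty}+\|d^-\|_{L^\infty}+\|d^+\|_{L^\infty}$ times a factor depending only on $n$ and $\|G\|$, and then $T$ small, of size comparable to $1/C_R$, gives $\|\psi(y)(\tau,\cdot)\|_{L^\infty}\le R$ for all $\tau\in[0,T]$ whenever $y$ belongs to $\B_R=\{y\in\Ycal:\|y(\tau,\cdot)\|_{L^\infty}\le R\text{ for all }\tau\in[0,T]\}$. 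This set is closed in $(\Ycal,\|\cdot\|_\dag)$ — the $\dag$-norm being equivalent to the usual norm of $\Ycal$ and the $L^\infty$-bound being stable under $L^1$-convergence — hence a complete metric space, and $\psi$ maps it into itself; all the constants $R,T$ are independent of $\beta$.

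Third comes the contraction estimate. For $y_1,y_2\in\B_R$ the data terms in $\psi_i(y_1)-\psi_i(y_2)$ cancel, leaving only the integrals of $f(y_1(t,\xi(t)),u,v^j)-f(y_2(t,\xi(t)),u,v^j)$, bounded by $C_R\,|y_1-y_2|(t,\xi(t))$. The decisive point is to turn the resulting one-dimensional integrals along the broken characteristics into the spatial $L^1$-norm of $\|\cdot\|_\dag$: for fixed $t$ the map $\sigma\mapsto\xi(t)$ — the position at time $t$ of the broken characteristic ending at $(\tau,\sigma)$ — is bi-Lipschitz with constants controlled by $\lambda_{\min},\lambda_{\max},\Lambda_0$ and $T$, where $\lambda_{\min}>0$ enters both along each characteristic segment and at the reflection point through $\partial_\sigma t_i^*=-\partial_\sigma s_i/\lambda_i$. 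After summing over $i$ and the finitely many curves, Fubini's theorem and this change of variables give
\[
 \sum_{i=1}^n\int_0^L|\psi_i(y_1)-\psi_i(y_2)|(\tau,\sigma)\,d\sigma\;\le\;C\int_0^\tau\sum_{i=1}^n\int_0^L|(y_1-y_2)_i(t,x)|\,dx\,dt\;\le\;\frac{C}{K}\,e^{K\tau}\,\|y_1-y_2\|_\dag
\]
with $C=C(R,T,n,G,\Lambda)$ independent of $K$; multiplying by $e^{-K\tau}$ and taking the supremum over $\tau\in[0,T]$ yields $\|\psi(y_1)-\psi(y_2)\|_\dag\le(C/K)\|y_1-y_2\|_\dag$, so $K:=2C$ makes $\psi$ a strict contraction on $\B_R$ with constant $\tfrac12$. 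Banach's theorem then delivers the unique fixed point $y\in\B_R$, which lies in $C([0,T];L^\infty(0,L;\RR^n)\cap L^1(0,L;\RR^n))$ by construction and, by \cite{HallerHoermann2008}, is the weak solution of \eqref{eq:SysPDErelaxed}, \eqref{eq:SysBC}, \eqref{eq:SysIC}; uniqueness within the larger class follows from a short continuation argument showing any such solution must already lie in $\B_R$. I expect the main obstacle to be precisely this conversion of the boundary contributions into an $L^1$-in-space estimate: the traces of $\psi(y)$ on $\{x=0\}$ and $\{x=L\}$ are not controlled by the $L^1$-norm in $\|\cdot\|_\dag$, so one cannot argue term by term, and it is the unwinding into integrals along broken characteristics together with $0<\lambda_{\min}\le|\lambda_i|\le\lambda_{\max}$ for the change of variables back to the spatial variable that makes both the $L^1$-estimate and the clean contraction constant $\tfrac12$ (once $K$ is fixed) go through.
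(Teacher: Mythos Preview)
Your proposal is correct and follows the same classical fixed-point route the paper invokes: the paper's own proof simply notes that Hypothesis~\ref{hyp:Reg} yields $\bar y\in L^1\cap L^\infty$, $d^\pm\in L^1\cap L^\infty$, $f$ locally Lipschitz in $y$, and $|\beta|\le 1$, and then defers the contraction argument to \cite{CourantHilbert1962,Bressan2000}. What you have written is precisely the explicit version of that argument---the unwinding of the boundary recursion for small $T$, the invariant $L^\infty$-ball using $f(0,u,v^j)=0$, and the bi-Lipschitz change of variables $\sigma\mapsto\xi(t)$ to convert integrals along characteristics into the spatial $L^1$-norm---so there is no methodological difference, only a difference in level of detail.
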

\begin{proof} Hypothesis~\ref{hyp:Reg} implies that the initial and boundary data, respectively, 
satisfies $\bar{y} \in L^p(0,L;\RR^n)$, $d^- \in L^p(0,T;\RR^{r})$ and $d^+ \in L^p(0,T;\RR^{(n-r)})$
for all $p\in\{1,\infty\}$ and all $T>0$ and that $f$ is locally Lipschitz-continuous in $y$. 
Noting that all feasible $\beta$ are uniformly bounded by one in the sup-norm, the result 
follows from classical fixed-point arguments \cite{CourantHilbert1962}, \cite{Bressan2000}.
\end{proof}

In order to obtain further regularity properties of the solution, we adapt the following definitions from 
\cite{Oberguggenberger1986}. For any piecewise smooth control $\beta\: [0,\infty) \to [0,1]^M$ with
discontinuities at $\{\theta_i\}_{i=1}^\infty$, let $E^0(\beta)$ denote the union of all forward 
characteristic curves $s_i$ generated by $\Lambda$  (and their reflections at the boundaries) which 
lie in $\Omega_\infty = [0,\infty) \times [0,L]$, which emerge from the boundary points
\begin{equation}\label{eq:Bpoints}
 \{(0,x_0),\ldots,(0,x_\nu),~(\tau_0,0),\ldots,(\tau_K,0),~(\tau_0,L),\ldots,(\tau_K,0)\}
\end{equation}
and their intersection points with the sets $\{\theta_i\} \times [0,L]$ for all $i=1,\ldots,\infty$.
Further, for $k\in\NN$, let $E^k(\beta)$ be the union of $E^{k-1}(\beta)$ and all forward characteristic 
curves (and their boundary reflections) emerging from intersection points of characteristics that defined $E^{k-1}$.
Let $E_T(\beta)$ be the closure of all points in $\bigcup_{k=1}^\infty E^k(\beta) \cap \Omega_T$.
Supposing that
\begin{equation}\label{eq:HypNoDense}
 E_T(\beta) \cap \{t\} \times [0,L]~\text{is nowhere dense in}~\{t\} \times [0,L],\quad t\in (0,T],
\end{equation}
$E_T(\beta)$ is defined by finitely many discrete curves, which divide $\Omega_T$ up into finitely many simply 
connected open sets $\Omega_T^m$, $m=1,\ldots,\mu$, $\mu \in \NN$. We set
\begin{equation}
W^{1,1}_*(\Omega_T \setminus E_T(\beta)) := \bigotimes_{m=1}^\mu W^{1,1}(\Omega_T^m),\quad C^0_*(\Omega_T \setminus E_T(\beta)) := \bigotimes_{m=1}^\mu C^0(\Omega_T^m).
\end{equation}

With this notation, we note the following additional regularity of the $L^1$-solution for piecewise smooth data and controls.

\begin{lemma} \label{lem:Oberguggenberger} Under Hypothesis~\ref{hyp:Reg} and for any piecewise smooth control $u\: [0,\infty) \to U$ 
and any piecewise smooth control $\beta\: [0,\infty) \to [0,1]^M$, there exist a constant $T>0$ (less or equal to the constant obtained in 
Lemma~\ref{lem:L1sol}) such that \eqref{eq:HypNoDense} holds for the set $E_T(\beta)$ defined above. Moreover, it holds that the solution 
$y$ of \eqref{eq:SysPDErelaxed} subject to the boundary conditions \eqref{eq:SysBC} and the initial condition \eqref{eq:SysIC} 
obtained in Lemma~\ref{lem:L1sol} satisfies
\begin{equation}
y \in W^{1,1}_*(\Omega_T \setminus E_T(\beta)) \cap C^0_*(\Omega_T \setminus E_T(\beta)).
\end{equation}
\end{lemma}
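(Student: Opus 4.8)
The goal is twofold: establish that the "no-density" condition \eqref{eq:HypNoDense} holds for $E_T(\beta)$ on a possibly shorter time interval, and then invoke the regularity theory of \cite{Oberguggenberger1986} to conclude the piecewise $W^{1,1}\cap C^0$ regularity. Let me sketch how I'd organize this.

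First, I would analyze the geometry of the characteristic curves. Under Hypothesis \ref{hyp:Reg}, the $\lambda_i$ are Lipschitz and (being separated away from zero, since $\lambda_i<0$ for $i\le r$ and $\lambda_i>0$ for $i>r$) bounded above and below in modulus on $[0,L]$. Hence each characteristic $s_i(\cdot;\tau,\sigma)$ crosses the strip $[0,L]$ in a time bounded below by some $\delta>0$; equivalently, between two successive boundary reflections at least time $\delta$ elapses. The key quantitative point is that on a time interval $[0,T]$ with $T$ small enough, each characteristic reflects only finitely many (indeed, a uniformly bounded number of) times, so the iterated construction $E^0\subset E^1\subset\cdots$ stabilizes after finitely many steps: $E_T(\beta)=E^N(\beta)\cap\Omega_T$ for some $N=N(T,\delta)$. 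The combinatorial count here is the heart of verifying \eqref{eq:HypNoDense}: starting from the finitely many seed points in \eqref{eq:Bpoints} together with the (finitely many, on $[0,T]$) switching times $\theta_i$ of the piecewise-smooth control $\beta$, each generation adds only finitely many new curves, because a fresh curve is born only at an intersection of two existing curves or of an existing curve with a switching line $\{\theta_i\}\times[0,L]$, and finitely many curves have finitely many pairwise intersections. Therefore $E_T(\beta)$ is a finite union of Lipschitz arcs, whose slices $E_T(\beta)\cap(\{t\}\times[0,L])$ are finite, hence nowhere dense; this gives \eqref{eq:HypNoDense} and simultaneously the decomposition of $\Omega_T$ into finitely many simply connected pieces $\Omega_T^m$.

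Second, with the exceptional set $E_T(\beta)$ now known to be a finite curve network, I would apply the regularity result of \cite{Oberguggenberger1986} directly. On each open piece $\Omega_T^m$, the data feeding the fixed-point map $\psi$ of \eqref{eq:psidef1}--\eqref{eq:psidef3} is smooth enough: the initial/boundary data are $W^{1,1}$ away from the seed discontinuities (Hypothesis \ref{hyp:Reg}), the coefficients $\lambda_i$ are Lipschitz so the characteristic flow $(\tau,\sigma)\mapsto s_i(t;\tau,\sigma)$ and the intersection times $t_i^*(\tau,\sigma)$ are Lipschitz on each $\Omega_T^m$, and the source term integrand $t\mapsto\sum_j\beta_j(t)f(y,u(t),v^j)$ is piecewise smooth in $t$ with its discontinuities (from $\beta$, $u$) confined precisely to the lines $\{\theta_i\}\times[0,L]$ that were included in $E_T(\beta)$. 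Consequently the composition appearing in $\psi_i$ has bounded distributional first derivatives on each $\Omega_T^m$, and $\psi$ maps $W^{1,1}_*(\Omega_T\setminus E_T(\beta))\cap C^0_*(\Omega_T\setminus E_T(\beta))$ into itself. Since by Lemma \ref{lem:L1sol} $\psi$ is a strict contraction (constant $\tfrac12$) on $\Ycal$ with the weighted norm $\|\cdot\|_\dag$, and the piecewise-regular space is a closed subset of $\Ycal$ (one may intersect with a ball of fixed radius to get completeness of the relevant metric subspace), the unique fixed point $y$ lies in $W^{1,1}_*(\Omega_T\setminus E_T(\beta))\cap C^0_*(\Omega_T\setminus E_T(\beta))$.

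I expect the main obstacle to be the first part — showing rigorously that \eqref{eq:HypNoDense} can be achieved by shrinking $T$, i.e., controlling how the curve network $E_T(\beta)$ grows under the iteration. One has to rule out the pathological possibility that successive generations of characteristics and their reflections, intersecting one another, accumulate and produce a set that is dense in some slice. The bounded-reflection argument (each reflection costs time $\ge\delta$, so at most $\lceil T/\delta\rceil$ reflections occur) together with the finiteness of pairwise intersections of finitely many Lipschitz curves should close this, but making the induction on generations clean — tracking that generation $k$ contributes only finitely many curves and terminates — is the delicate bookkeeping. Once past that, invoking \cite{Oberguggenberger1986} on each cell is essentially a citation plus a routine verification that our hypotheses meet theirs.
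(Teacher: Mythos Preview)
Your approach is essentially the same as the paper's, which consists of a single sentence: the result follows from \cite[Theorem~3.1 and Remark~3.3]{Oberguggenberger1986} applied iteratively on the time subintervals where $d^-$, $d^+$, $u$, and $\beta$ are jointly continuous. You supply considerably more detail than the paper does, particularly the geometric reasoning behind \eqref{eq:HypNoDense}, and you correctly identify that step as the only nontrivial part. The one tactical difference is that the paper slices $[0,T]$ at the discontinuities of $u,\beta,d^\pm$ and invokes Oberguggenberger on each smooth slab before concatenating, whereas you absorb those discontinuities into the seed set for $E_T(\beta)$ (as the paper's own definition already does) and argue once on all of $\Omega_T$; both routes are fine.

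One small wrinkle in your second part: the step ``$\psi$ preserves $W^{1,1}_*\cap C^0_*$ and is a $\tfrac12$-contraction on $\Ycal$, so the fixed point lies in $W^{1,1}_*\cap C^0_*$'' would need $W^{1,1}_*\cap C^0_*$ (or a bounded subset of it) to be closed in $\Ycal=C([0,T];L^1)$, which it is not --- $W^{1,1}$-boundedness does not survive $L^1$-limits. The mechanism that actually works, and what \cite{Oberguggenberger1986} provides, is a contraction or a-priori estimate in a piecewise-$W^{1,1}$ norm, not merely invariance plus $L^1$-contraction. Since you are citing that reference for exactly this purpose, this is a wording issue rather than a genuine gap in the strategy.
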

\begin{proof}
The result follows directly from \cite[Theorem~3.1 and Remark~3.3]{Oberguggenberger1986} applied iteratively on time intervals 
where $d^-$, $d^+$, $u$ and $\beta$ are jointly continuous.
\end{proof}

Moreover, we will need the following fixed point argument.

\begin{lemma}\label{lem:fixpointarg}
Consider a Banach space $(X,\|\cdot\|_X)$. Let $\Xcal := \{ x \in X : \|x\|_\dag < \infty\}$ for 
some norm $\|\cdot\|_\dag$ satisfying
\begin{equation}\label{eq:normcomp}
 \lc \|x\|_X \leq \|x\|_\dag \leq \uc \|x\|_X,\quad x \in X.
\end{equation}
Let $\phi,\psi\: \Xcal \to \Xcal$ be continuous mappings such that
\begin{equation}\label{eq:PhiPsiconstraction}
\|\phi(x) - \phi(\zeta)\|_\dag \leq \frac12 \| x - \zeta \|_\dag, \quad \| \psi(x) -  \psi(\zeta)\|_\dag \leq \frac12 \| x - \zeta \|_\dag,\quad x,\zeta \in \Xcal.
\end{equation}
Suppose that there exists a constant $C>0$ so that
\begin{equation}\label{eq:PhiPsibound}
\|\phi(x^*)-\psi(x^*)\|_X \leq C,
\end{equation}
where $x^* \in \Xcal$ denotes the (unique) fixed-point of $\phi$ in $X$. For the (unique) fixed point $\zeta^* \in \Xcal$ of $\psi$ in $X$ it then holds
\begin{equation}\label{eq:conclusionfixedpointest}
 \|x^* - \zeta^*\|_X \leq 2 \lc^{-1} \uc C. 
\end{equation}
\end{lemma}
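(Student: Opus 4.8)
The plan is to establish \eqref{eq:conclusionfixedpointest} by a standard stability-of-fixed-points argument; the only twist is that the contraction estimates \eqref{eq:PhiPsiconstraction} are phrased in the auxiliary norm $\|\cdot\|_\dag$ whereas the perturbation bound \eqref{eq:PhiPsibound} and the desired conclusion are phrased in $\|\cdot\|_X$, so the equivalence \eqref{eq:normcomp} must be invoked at exactly the right place. Existence and uniqueness of $x^*$ and $\zeta^*$ I take from the hypotheses (alternatively they follow from Banach's fixed point theorem, as \eqref{eq:normcomp} makes $(\Xcal,\|\cdot\|_\dag)$ a Banach space on which $\phi$ and $\psi$ are $\tfrac12$-contractions).

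First I would use $x^* = \phi(x^*)$ and $\zeta^* = \psi(\zeta^*)$ and insert the auxiliary point $\psi(x^*)$ to get
\[
 \|x^* - \zeta^*\|_\dag = \|\phi(x^*) - \psi(\zeta^*)\|_\dag \le \|\phi(x^*) - \psi(x^*)\|_\dag + \|\psi(x^*) - \psi(\zeta^*)\|_\dag .
\]
The second summand is bounded by $\tfrac12\|x^*-\zeta^*\|_\dag$ by the contraction property of $\psi$ in \eqref{eq:PhiPsiconstraction}, and since $\|x^*-\zeta^*\|_\dag$ is finite (both points lie in $\Xcal$) it can be absorbed into the left-hand side, leaving $\|x^*-\zeta^*\|_\dag \le 2\,\|\phi(x^*)-\psi(x^*)\|_\dag$. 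Note that the contraction property of $\phi$ is not used here; it only serves to make $x^*$ well defined. It is also essential that the intermediate point be $\psi(x^*)$, so that the leftover difference is evaluated at the \emph{known} fixed point $x^*$, matching the way \eqref{eq:PhiPsibound} is stated, rather than at $\zeta^*$.

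Finally I would pass to the $X$-norm by applying \eqref{eq:normcomp} twice: the lower bound gives $\|x^*-\zeta^*\|_X \le \lc^{-1}\|x^*-\zeta^*\|_\dag$, while the upper bound together with \eqref{eq:PhiPsibound} gives $\|\phi(x^*)-\psi(x^*)\|_\dag \le \uc\,\|\phi(x^*)-\psi(x^*)\|_X \le \uc C$. Chaining these with the estimate from the previous step yields
\[
 \|x^* - \zeta^*\|_X \le \lc^{-1}\|x^*-\zeta^*\|_\dag \le 2\,\lc^{-1}\|\phi(x^*)-\psi(x^*)\|_\dag \le 2\,\lc^{-1}\uc C,
\]
which is \eqref{eq:conclusionfixedpointest}. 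There is no real analytic obstacle in this lemma; the only point requiring care is the bookkeeping — tracking which norm each quantity is measured in and invoking the correct one of the two inequalities in \eqref{eq:normcomp} at each of the two places where the norms are switched.
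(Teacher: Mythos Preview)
Your proof is correct and follows essentially the same route as the paper: split $\|x^*-\zeta^*\|_\dag$ via the intermediate point $\psi(x^*)$, absorb the $\tfrac12$-contraction term, and use the norm equivalence \eqref{eq:normcomp} to pass between $\|\cdot\|_\dag$ and $\|\cdot\|_X$. The only cosmetic difference is that the paper applies the upper bound in \eqref{eq:normcomp} to $\|\phi(x^*)-\psi(x^*)\|$ already in the first line, whereas you keep everything in the $\dag$-norm until the end; the arithmetic is identical.
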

\begin{proof}
By the triangular inequality and \eqref{eq:normcomp} we have
\begin{equation}
 \| x^* - \zeta^*\|_\dag \leq \uc \| \phi(x^*) - \psi(x^*)\|_X + \|\psi(x^*) - \psi(\zeta^*)\|_\dag. 
\end{equation}
Using \eqref{eq:PhiPsiconstraction} and \eqref{eq:PhiPsibound}, this yields
\begin{equation}
 \| x^* - \zeta^*\|_\dag \leq \uc C + \frac12 \|x^* - \zeta^*\|_\dag,
\end{equation}
and equivalently \eqref{eq:conclusionfixedpointest} using again \eqref{eq:normcomp}.
\end{proof}

With the above auxilary results, we can now prove the following a-priori estimate on the difference of two solutions 
corresponding to two different controls with a bounded integerated difference.

\begin{theorem}\label{thm:gapest} Assume Hypothesis~\ref{hyp:Reg} and let $u\: [0,\infty) \to U$, $\beta\: [0,\infty) \to [0,1]^M$ 
and $\tilde{\beta}\: [0,\infty) \to [0,1]^M$ be piecewise smooth controls. Moreover, assume that for some given $\varepsilon>0$ and 
$T$ sufficiently small,
\begin{equation}\label{eq:SURboundTHM1}
 \max_{j=1,\ldots,M} \sup_{t \in [0,T]} \left| \int_0^t \beta_j(s) - \tilde{\beta}_j(s) \,ds \right| \leq \varepsilon.
\end{equation}
Then, there exists a constant $\bar C=\bar C(u,\beta)>0$ (independent of $\tilde \beta$) so that
\begin{equation}\label{eq:ToShowTheorem}
 \|y(u,\beta)-y(u,\tilde\beta)\|_Y \leq \bar C \varepsilon,
\end{equation}
where $y(u,\beta)$ and $y(u,\tilde{\beta})$ denote the $L^1$-solutions of the relaxed problem \eqref{eq:SysPDErelaxed} subject 
to the boundary conditions \eqref{eq:SysBC}, initial condition \eqref{eq:SysIC} and the control constraints \eqref{eq:Cconstraintrelaxed} 
with controls $(u,\beta)$ and $(u,\tilde{\beta})$, respectively. 
\end{theorem}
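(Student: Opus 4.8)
The plan is to deduce \eqref{eq:ToShowTheorem} from the abstract comparison in Lemma~\ref{lem:fixpointarg}. Denote by $\Psi_\beta$ and $\Psi_{\tilde\beta}$ the integral transformations \eqref{eq:psidef1}--\eqref{eq:psidef3} built from the controls $(u,\beta)$ and $(u,\tilde\beta)$, with unique fixed points $x^*:=y(u,\beta)$ and $\zeta^*:=y(u,\tilde\beta)$ in $\Ycal=C([0,T];L^1(0,L;\RR^n))$ (Lemma~\ref{lem:L1sol}). Since $u$ is fixed and every feasible $\beta,\tilde\beta$ is bounded by $1$ in the sup-norm, the constants $T$ and $K$ from Lemma~\ref{lem:L1sol} can be chosen so that \emph{both} $\Psi_\beta$ and $\Psi_{\tilde\beta}$ are $\tfrac12$-contractions on $(\Ycal,\|\cdot\|_\dag)$ and so that Lemma~\ref{lem:Oberguggenberger} applies for $\beta$. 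I then invoke Lemma~\ref{lem:fixpointarg} with $X=\Xcal=\Ycal$, $\|\cdot\|_X=\|\cdot\|_Y$ (the norm of $\Ycal$), $\|\cdot\|_\dag$ as in \eqref{eq:defDagnorm} so that \eqref{eq:normcomp} holds with $\lc=e^{-KT}$, $\uc=1$, and $\phi=\Psi_\beta$, $\psi=\Psi_{\tilde\beta}$ (both continuous, being contractions). Everything then reduces to proving \eqref{eq:PhiPsibound} in the form
\begin{equation*}
 \|\Psi_\beta(x^*)-\Psi_{\tilde\beta}(x^*)\|_Y\le\bar C_0\,\varepsilon,
\end{equation*}
after which \eqref{eq:conclusionfixedpointest} gives \eqref{eq:ToShowTheorem} with $\bar C=2e^{KT}\bar C_0$.

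Since $\Psi_\beta(x^*)=x^*$, I estimate $\delta:=\Psi_{\tilde\beta}(x^*)-x^*$. Along the characteristic through $(\tau,\sigma)$, the $i$-th component of $\delta$ is the sum of the source-term difference
\begin{equation*}
 \sum_{j=1}^M\int_{t_i^*}^{\tau}\bigl(\tilde\beta_j(t)-\beta_j(t)\bigr)\,f\bigl(x^*(t,s_i(t;\tau,\sigma)),u(t),v^j\bigr)\,dt
\end{equation*}
and the contribution of the boundary coupling $Y_i$ from \eqref{eq:psidef3}. For the source-term difference I set $w_j(t):=\int_0^t(\tilde\beta_j-\beta_j)\,ds$, so $|w_j|\le\varepsilon$ on $[0,T]$ by \eqref{eq:SURboundTHM1}, and integrate by parts in $t$, which produces the boundary terms $\bigl[w_j(t)f(\cdots)\bigr]_{t_i^*}^{\tau}$, bounded by $\varepsilon\|f(x^*,u,v^j)\|_{L^\infty}$ (finite since $x^*\in C^0_*(\Omega_T\setminus E_T(\beta))$ and $u$ is piecewise smooth), together with the integral $-\int_{t_i^*}^{\tau}w_j(t)\tfrac{d}{dt}f(x^*(t,s_i(t;\tau,\sigma)),u(t),v^j)\,dt$. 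For the latter I use the \emph{transversal} regularity $x^*\in W^{1,1}_*(\Omega_T\setminus E_T(\beta))$ of Lemma~\ref{lem:Oberguggenberger}: off the finitely many curves of $E_T(\beta)$ the map $t\mapsto x^*(t,s_i(t;\tau,\sigma))$ is absolutely continuous with $L^1$ derivative along the characteristic, and since $f$ is smooth in $y$, locally Lipschitz in $u$, and $u$ is piecewise smooth, $\tfrac{d}{dt}f(x^*(t,s_i),u(t),v^j)\in L^1_t$; after the bi-Lipschitz change of variables along the characteristic flow, the corresponding $\sigma$-integral over $[0,L]$ of $\int_{t_i^*}^{\tau}|\tfrac{d}{dt}f(\cdots)|\,dt$ is bounded uniformly in $\tau\in[0,T]$ by the $W^{1,1}_*$-norm of $x^*$ and the piecewise $C^1$-norm of $u$. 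Hence the source-term difference contributes to $\|\delta\|_Y$ at most $c_0\,\varepsilon$ for a constant $c_0=c_0(u,\beta)$ independent of $\tilde\beta$.

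Comparing \eqref{eq:psidef3} for $\Psi_{\tilde\beta}(x^*)$ and for $\Psi_\beta(x^*)=x^*$, the $Y_i$-contribution to $\delta_i(\tau,\sigma)$ equals the $i$-th component of $G\,(\delta^-(t_i^*,0),\delta^+(t_i^*,L))^\top$ when the backward characteristic first meets a lateral boundary at $t_i^*>0$, and vanishes when $t_i^*=0$. Since $T$ is sufficiently small, every backward characteristic reaches $\{t=0\}$ after at most a fixed finite number $N=N(T,\Lambda,L)$ of lateral reflections, and iterating the source-term bound through these reflections yields $\|\delta\|_Y\le\bar C_0\,\varepsilon$ with $\bar C_0$ depending only on $(u,\beta)$, $\|G\|$ and $N$; this is \eqref{eq:PhiPsibound}, and Lemma~\ref{lem:fixpointarg} concludes. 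The crux---and the step I expect to be the main obstacle---is the integration by parts: it is precisely there that the non-classical regularity of Lemma~\ref{lem:Oberguggenberger} is indispensable, since the relaxed (or integer) control generically excites jump discontinuities of $x^*$ across characteristics, so that without $W^{1,1}_*$-regularity the term $\tfrac{d}{dt}f(x^*(t,s_i),u(t),v^j)$ need not be integrable; the remaining delicacy is the bookkeeping needed to keep every constant dependent on $(u,\beta)$ alone and to ensure that the reflection recursion terminates for small $T$.
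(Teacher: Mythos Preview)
Your proposal is correct and follows essentially the same route as the paper's proof: reduce to Lemma~\ref{lem:fixpointarg} with $\lc=e^{-KT}$, $\uc=1$, then bound $\|\Psi_\beta(x^*)-\Psi_{\tilde\beta}(x^*)\|_Y$ by integrating by parts along characteristics (using \eqref{eq:SURboundTHM1} for the primitive $w_j$ and Lemma~\ref{lem:Oberguggenberger} to control $\tfrac{d}{dt}f(x^*(t,s_i),u(t),v^j)$), and close the boundary contribution $Y_i$ by a finite recursion in $t_i^*$. The paper carries out exactly these steps, with the same identification of the crux.
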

\begin{proof}
Let $T$ be less or equal to the constant $T$ obtained in Lemma~\ref{lem:Oberguggenberger} for the fixed control $(u,\beta)$. Moreover, 
let $\psi$ denote the integral transformation \eqref{eq:psidef1}--\eqref{eq:psidef3} associated to the fixed control $(u,\beta)$ and let 
$\phi$ denote the corresponding integral transformation associated to the fixed control $(u,\tilde\beta)$. By Lemma~\ref{lem:L1sol}, there 
exists $K>0$ such that both transformations are strict contractions with contraction constant $\frac12$ on $(\Ycal)$ with respect 
to the norm $\|\cdot\|_\dag$ defined in \eqref{eq:defDagnorm}, possessing the unique fixed points $y=y(u,\beta)$ and 
$\tilde y = y(u,\tilde{\beta})$, respectively.

We will now show existence of a constant $\tilde{C}$ such that
\begin{equation}\label{eq:toshowNew}
\int_0^L |\psi_i(y)(\tau,\sigma)-\phi_i(y)(\tau,\sigma)| d\sigma \leq \tilde C \varepsilon,~i=1,\ldots,n, ~t \in [0,T]
\end{equation}
holds for the fixed point $y=y(u,\beta)$.

We have for $\tau \in [0,T]$ and $0 \leq t^*_i=t^*_i(\tau,\sigma)<\tau$
\begin{equation}
\begin{aligned}
&\int_0^L \left| \psi_i(y)(\tau,\sigma)-\phi_i(y)(\tau,\sigma)\right|d\sigma \leq  \|Y_i(\psi;\tau,\sigma)-Y_i(\phi;\tau,\sigma)\|~+\\
&\qquad \sum_{j=1}^M \int_0^L \left| \int_{t_i^*}^\tau [\beta_j(\vartheta)-\tilde\beta_j(\vartheta)] f(y(\vartheta,s_i(\vartheta;\tau,\sigma)),u(\vartheta),v^i)\,d\vartheta\right|d\sigma.
\end{aligned}
\end{equation}

Integration by parts for the integral in $\vartheta$ yields
\begin{equation}
\begin{aligned}
& \sum_{j=1}^M \int_0^L \left| \int_{t_i^*}^\tau [\beta_j(\vartheta)-\tilde\beta_j(\vartheta)] f(y(\vartheta,s_i(\vartheta;\tau,\sigma)),u(\vartheta),v^j)\,d\vartheta\right|d\sigma \leq\\
&\quad \sum_{j=1}^M \int_0^L  \int_{t_i^*}^\tau \left| \int_0^\vartheta \beta_j(\xi)-\tilde\beta_j(\xi)\,d\xi\right| \left| D_\vartheta f(y(\vartheta,s_i(\vartheta;\tau,\sigma)),u(\vartheta),v^j)\right| d\vartheta d\sigma~+ \\
&\quad \sum_{j=1}^M \int_0^L \left| \int_{0}^\tau \beta_j(\xi)-\tilde \beta_j(\xi)\,d\xi \right| |f(y(\tau,s_i(\tau;\tau,\sigma)),u(\tau),v^j)|d\sigma~+ \\
&\quad \sum_{j=1}^M \int_0^L \left| \int_0^{t_i^*} \beta_j(\xi)-\tilde \beta_j(\xi)\,d\xi \right| |f(y(t_i^*,s_i(t_i^*;\tau,\sigma)),u(t_i^*),v^j)|d\sigma.
\end{aligned}
\end{equation}
By assumption \eqref{eq:SURboundTHM1}, this estimate becomes
\begin{equation}\label{eq:proofboundstep}
\begin{aligned}
& \sum_{j=1}^M \int_0^L \left| \int_{t_i^*}^\tau [\beta_j(\vartheta)-\tilde\beta_j(\vartheta)] f(y(\vartheta,s_i(\vartheta;\tau,\sigma)),u(\vartheta),v^j)\,d\vartheta\right|d\sigma \leq\\
&\quad \varepsilon \sum_{j=1}^M \bigg(\int_{t_i^*}^\tau \int_0^L \left| D_{\vartheta} f(y(\vartheta,s_i(\vartheta;\tau,\sigma)),u(\vartheta),v^j)\right| d\sigma~+ \\
&\quad \int_0^L |f(y(\tau,s_i(\tau;\tau,\sigma)),u(\tau),v^j)| + |f(y(t_i^*,s_i(t_i^*;\tau,\sigma)),u(t_i^*),v^j)| d\sigma \bigg).
\end{aligned}
\end{equation}
The chain rule yields
\begin{equation}\label{eq:proofboundstep2}
\begin{aligned}
&\int_0^L \left| D_{\vartheta} f(y(\vartheta,s_i(\vartheta;\tau,\sigma)),u(\vartheta),v^j)\right| d\sigma  \leq\\
&\qquad\sum_{k=1}^n\int_0^L \bigg| D_{y_k} f(y(\vartheta,s_i(\vartheta;\tau,\sigma)),u(\vartheta),v^j)\bigg| \bigg|D_\vartheta y_k(\vartheta,s_i(\vartheta;\tau,\sigma))\bigg|~+ \\ 
&\qquad \bigg|D_{u} f(y(\vartheta,s_i(\vartheta;\tau,\sigma)),u(\vartheta),v^j)\bigg| \bigg|D_\vartheta u(\vartheta) \bigg| d\sigma.
\end{aligned}
\end{equation}
The assumptions in Hypothesis~\ref{hyp:Reg} on $f$, the assumed regularity of $u$ and the regularity result on $y=y(u,\beta)$ 
in Theorem~\ref{lem:Oberguggenberger} now yields that all terms in absolute values appearing in the right hand side of \eqref{eq:proofboundstep2} are essentially bounded. 
Since all remaining terms in the right hand side of \eqref{eq:proofboundstep} are finite and independent of $\tilde \beta$, there exists a constant $\hat{C}$ independent of $\tilde \beta$ such that
\begin{equation}
 \sum_{j=1}^M \int_0^L \left| \int_{t_i^*}^\tau [\beta_j(\vartheta)-\tilde\beta_j(\vartheta)] f(y(\vartheta,s_i(\vartheta;\tau,\sigma)),u(\vartheta),v^j)\,d\vartheta\right|d\sigma \leq \hat{C}\varepsilon.
\end{equation}
Moreover, for $\|Y_i(\psi;\tau,\sigma)-Y_i(\phi;\tau,\sigma)\|$, we can use \eqref{eq:psidef3}. For $i \in \{1,\ldots,r\}$ and $(\tau,\sigma)$ such that $t_i^*>0$ for all $\sigma \in [0,L]$, we obtain
\begin{equation}\label{eq:BCest}
 \|Y_i(\psi;\tau,\sigma)-Y_i(\phi;\tau,\sigma)\| \leq \|G\| \left\| \begin{pmatrix} \psi^-(y)(t_i^*,0) - \phi^-(y)(t_i^*,0)\\ \psi^+(y)(t_i^*,0) - \phi^+(y)(t_i^*,0) \end{pmatrix} \right\|
\end{equation}
and a similar estimate is obtained in the other cases. We can then repeat the above estimates for each of the components of the vectors in the right hand side of \eqref{eq:BCest}. 
Since $t_i^*<\tau$ and since the estimate becomes trivial for $t_i^*=0$, the existence of a constant $\tilde{C}$ independent of $\tilde \beta$ 
such that \eqref{eq:toshowNew} follows from induction.

Finally, we note that $X=C([0,T];L^1(0,L;\RR^n))$ is a Banach space for which \eqref{eq:normcomp} holds with $\lc=e^{-KT}$ and $\uc=1$. So the desired estimate \eqref{eq:ToShowTheorem} follows \eqref{eq:toshowNew} and Lemma~\ref{lem:fixpointarg} with $\Xcal=\Ycal$ and $\bar C = 2 \lc^{-1}\uc n \tilde C = 2e^{KT} n \tilde C$.
\end{proof}

A combination of Theorem~\ref{thm:gapest} and Lemma~\ref{lem:SUR} yields a method for solving the mixed-integer optimal control problem stated 
in Section~\ref{sec:problem} subject to Hypothesis~\ref{hyp:Reg} up to any requested accuracy. We state this method in form of the 
following proposition and remarks.

\begin{proposition} \label{prop:relgap} Assume Hypothesis~\ref{hyp:Reg} and suppose that for some sufficiently small $T$ the minimization of \eqref{eq:Cost} subject to the relaxed problem \eqref{eq:SysPDErelaxed} with the boundary data \eqref{eq:SysBC} and \eqref{eq:SysIC} and control constraints \eqref{eq:Cconstraintrelaxed} admits a piecewise smooth optimal control $(u,\beta)$ with associate optimal value $J^*$. Let $\alpha^k$ be a sequence of controls $\alpha^k\: [0,T] \to \{0,1\}$ given by Lemma~\ref{lem:SUR} obtained from $\beta$ for a sequence $\Delta t^k \to 0$ as $k \to \infty$. Define $v^k \: [0,T] \to V$ by $v^k(t)=\sum_{i=1}^M \alpha^k(t)v^k$. Then $y(u,v^k)$ converges strongly to $y(u,\beta)$ as $k\to \infty$. In particular, for norm-continuous cost functions $J$,
\begin{equation}\label{eq:convergence}
 \lim_{k \to \infty} J(y(u,v^k))=J^*.
\end{equation}
For Lipschitz-continuous $J$, the convergence speed is at least linear in $\Delta t^k$.
\end{proposition}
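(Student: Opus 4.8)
The plan is to assemble the statement from the constructive rounding bound of Lemma~\ref{lem:SUR} and the a-priori estimate of Theorem~\ref{thm:gapest}, the only genuinely new point being the identification of the integer-controlled solution of the original system with the $L^1$-solution of the relaxed system corresponding to a $\{0,1\}$-valued coefficient. First I would fix the piecewise smooth optimal control $(u,\beta)$ of the relaxed problem and take $T$ no larger than the threshold furnished by Lemma~\ref{lem:Oberguggenberger} for this particular $(u,\beta)$ --- this is what the hypothesis ``for some sufficiently small $T$'' should be read to guarantee --- so that the regularity of $y(u,\beta)$, the well-posedness of Lemma~\ref{lem:L1sol} and hence all hypotheses of Theorem~\ref{thm:gapest} are in force; with $(u,\beta)$ thus fixed, the constant $\bar C=\bar C(u,\beta)$ appearing in that theorem is one fixed number, independent of the competitor control.

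Next, for each $k$ let $\alpha^k\colon[0,T]\to\{0,1\}^M$ be the piecewise constant control provided by Lemma~\ref{lem:SUR}, so that
\[
 \varepsilon^k := \max_{j=1,\ldots,M}\ \sup_{t\in[0,T]}\Bigl|\int_0^t \beta_j(s)-\alpha^k_j(s)\,ds\Bigr| \le (M-1)\,\Delta t^k .
\]
Since $\alpha^k$ is $\{0,1\}$-valued with $\sum_{j=1}^M\alpha^k_j\equiv 1$, it is in particular piecewise smooth and admissible for \eqref{eq:Cconstraintrelaxed}; moreover, by the equivalence of \eqref{eq:SysPDE} and \eqref{eq:SysPDE2}, the solution $y(u,v^k)$ of the original system driven by the piecewise constant integer control $v^k=\sum_{i=1}^M\alpha^k_i v^i$ coincides with the $L^1$-solution $y(u,\alpha^k)$ of the relaxed system \eqref{eq:SysPDErelaxed} with $\beta$ replaced by $\alpha^k$, both understood in the sense of the forward characteristic flow \eqref{eq:psidef1}--\eqref{eq:psidef3}. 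Applying Theorem~\ref{thm:gapest} with $\tilde\beta=\alpha^k$ and $\varepsilon=\varepsilon^k$ then yields
\[
 \|y(u,\beta)-y(u,v^k)\|_Y \le \bar C\,\varepsilon^k \le \bar C\,(M-1)\,\Delta t^k ,
\]
and since $\Delta t^k\to 0$ this gives strong convergence $y(u,v^k)\to y(u,\beta)$ in $Y=C([0,T];L^1(0,L;\RR^n))$.

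Finally, convergence in $Y$ forces $y(u,v^k)(T,\cdot)\to y(u,\beta)(T,\cdot)$ in $L^1(0,L;\RR^n)$, so norm-continuity of $J$ yields $J(y(u,v^k))\to J(y(u,\beta))=J^*$, which is \eqref{eq:convergence}; and if $J$ is Lipschitz with constant $L_J$ then
\[
 |J(y(u,v^k))-J^*| \le L_J\,\|y(u,v^k)(T,\cdot)-y(u,\beta)(T,\cdot)\|_{L^1} \le L_J\,\|y(u,v^k)-y(u,\beta)\|_Y \le L_J\,\bar C\,(M-1)\,\Delta t^k ,
\]
which is linear in $\Delta t^k$. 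I expect the one step that needs genuine care to be the identification of $y(u,v^k)$ with the relaxed $L^1$-solution $y(u,\alpha^k)$ --- i.e.\ that a $\{0,1\}$-valued coefficient is admissible in Theorem~\ref{thm:gapest} and that the characteristic-flow solution concept is consistent between \eqref{eq:SysPDE} and \eqref{eq:SysPDErelaxed} --- together with checking that the ``sufficiently small $T$'' in the proposition may legitimately be taken as the $(u,\beta)$-dependent threshold of Lemma~\ref{lem:Oberguggenberger}; everything else is a direct combination of the results already established.
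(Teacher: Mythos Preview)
Your proof is correct and follows exactly the paper's approach: combine the constructive rounding bound of Lemma~\ref{lem:SUR} (applied with $\alpha=\alpha^k$) with the a-priori estimate of Theorem~\ref{thm:gapest} (applied with $\tilde\beta=\alpha^k$ and $\varepsilon=(M-1)\Delta t^k$) for each fixed $k$. The paper's proof is a single sentence to this effect; your version simply spells out the identification $y(u,v^k)=y(u,\alpha^k)$ and the continuity/Lipschitz arguments for $J$ that the paper leaves implicit.
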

\begin{proof}
This follows from applying Lemma~\ref{lem:SUR} with $\alpha=\alpha^k$ and Theorem~\ref{thm:gapest} with $\tilde \beta = \alpha^k$ and $\varepsilon=(M-1)\Delta t^k$
for each fixed $k$.
\end{proof}

\begin{remark}\label{rem:subopt} We may relax the assumption of existence of a piecewise smooth optimal control $(u,\beta)$ in Proposition~\ref{prop:relgap}
by existence of a piecewise smooth suboptimal control $(u,\beta)$ with $J^*=J(y(u,\beta))$. The statement of the Proposition then does not change otherwise, but it clearly 
yields a slightly weaker conclusion. We may also include combinatorial constraints on the integer controls, e.\,g., bounding the number of switches between the modes. Then, we do in 
general not obtain the convergence \eqref{eq:convergence}, but the relaxation method then still yields asymptotically a monotonic decreasing sequence $J(y(u,v^k))$ of suboptimal 
solutions as $k \to \infty$. Moreover, the relaxation method can also deal with state constraints. Details of these extensions are discussed in \cite{HanteSager2013}.
\end{remark}

\begin{remark}\label{rem:Edense}
Note that Theorem~\ref{thm:gapest}, Proposition~\ref{prop:relgap} and Remark~\ref{rem:subopt} are stated for sufficiently small $T>0$ guaranteeing the existence of the solution for the control $(u,\beta)$ both in the sense of Lemma~\ref{lem:L1sol} and Lemma~\ref{lem:Oberguggenberger}. However, this limitation can be removed under additional assumptions 
on $f$, $\Lambda$ and the system's dimension. For example, is well-known that if the mappings $y \mapsto f(y,u,v^j)$ and $x \mapsto \Lambda(x)$ are Lipschitz-continuous, then the $L^1$-solution (coinciding with the $L^\infty$-solution) in the sense of Lemma~\ref{lem:L1sol} exists for all $T>0$. Further, in the important case of $n=2$, we have $E_T(\beta)=E^0_T(\beta)$ 
and hence $E_T(\beta)$ satisfies \eqref{eq:HypNoDense} for all $T>0$. In that cases, for example, Theorem~\ref{thm:gapest} and Proposition~\ref{prop:relgap}
hold for arbitrary $T$. Nevertheless, for $n \geq 3$, even if the $L^1$-solution exists and coincides with the $L^\infty$-solution globally in time, $E_T(\beta)$ may violate \eqref{eq:HypNoDense}. 
For $n=3$, an example is also given in \cite{Oberguggenberger1986}. Hence, in general, for $n \geq 3$, \eqref{eq:HypNoDense} must be explicitly checked.
\end{remark}

\section{Application to flux switching control of conservation laws}\label{sec:example}
We consider the integer controlled semilinear hyperbolic system
\begin{equation}\label{eq:burgersrelax}
\begin{aligned}
 \eta_t + \xi_x &= 0,\\
 \xi_t + a^2 \eta_x &= -\kappa^{-1}(\xi-g(\eta,v)), \quad v \in \{1,2\}
\end{aligned}
\end{equation}
with $g(\eta,1)=\frac12 \eta^2$, $g(\eta,2)=-\frac12 \eta^2$, $\kappa>0$ and $a^2$ such that $a^2-\eta^2\geq 0$. 
In characteristic variables $y_1 = \eta + a \xi$ and $y_2 = \eta - a \xi$, the system \eqref{eq:burgersrelax} 
can be written as \eqref{eq:SysPDE} with $\Lambda=\text{diag}(a,-a)$ and a nonlinear function $f(y,u,v)$ independent of $u$.

For sufficiently small $\kappa$, the system \eqref{eq:burgersrelax} is an approximation of the control system 
\begin{equation}\label{eq:burgers}
 \eta_t \pm {\textstyle \frac12} \eta^2_x = 0
\end{equation}
where the control just consists of switching the sign in the flux function of the conservation law \eqref{eq:burgers}.
The approximation holds in the sense that, for fixed $v$, up to second order in $\kappa$
\begin{equation}\label{eq:burgersapprox}
 \eta_t \pm {\textstyle \frac12} \eta^2_x = \kappa((a^2-\eta^2)\eta_x)_x,
\end{equation}
see, \cite{JinXin1995,Bianchini2001}. Such flux switching control problems appear for example when traffic flow modeled by conservation laws 
is supposed to be optimized by switching dynamic speed limit signs as in \cite{HeygiEtAl2005}. Since Burger's equation is a typical test 
problem for traffic flow, this example shall demonstrate that the relaxation method investigated in this paper is a very efficient way 
to solve such problems with the advantage of being supported by a convergence theory and being applicable up to the same discretization levels 
that can be handled by optimal control techniques for hyperbolic systems without integer confinements.

For our example, we consider the initial data $\eta(0,x)=\eta_0(x)$, $\xi(0,x)=\frac12 g(\eta_0,1)+\frac12 g(\eta_0,2)=0$,
for a given $\eta_0$, periodic boundary conditions $\eta(t,0)=\eta(t,L)$, $\xi(t,0)=\xi(t,L)$ at the end points of the interval $[0,L]$ 
and consider the minimization of a tracking type cost functional
\begin{equation}\label{eq:costtrack}
 J(\eta)=\frac12\int_0^L |\eta(T,x)-\bar{\eta}(x)|^2\,dx
\end{equation}
for a given reference solution $\bar{\eta}$.
Assuming $\eta_0$ and $\bar{\eta}$ being piecewise $W^{1,1}$ on $(0,L)$ (which is not restrictive in most applications), we obtain from 
Lemma~\ref{lem:Oberguggenberger} that the $L^1$-solution of \eqref{eq:burgersrelax} is piecewise $W^{1,1}$ on $(0,L)$ for 
each $t \in [0,T]$. Since in this example we have $n=2$, \eqref{eq:HypNoDense} is satisfied for all $T>0$, cf. Remark~\ref{rem:Edense}. 
Hence, for any $T>0$ such that the usual $L^1$-solution exists, $\eta(T,\cdot)$ and $\bar{\eta}$ are, by Sobolev-embeddings, 
also in $L^2(0,L)$ and the evaluation of $J$ is therefore well-posed for any piecewise constant control $v\: [0,T] \to \{1,2\}$.

According to the relaxation method presented at the beginning of Section~\ref{sec:estimates} motivating Proposition~\ref{prop:relgap}, we 
introduce two new controls $\alpha_1,\alpha_2\: [0,T] \to \{0,1\}$, such that $\alpha_1(t)+\alpha_2(t)=1$, and consider their relaxation 
$\beta_1,\beta_2\: [0,T] \to [0,1]$ subject to $\beta_1(t)+\beta_2(t)=1$. Using the latter constraint yields $\beta_2(t)=1-\beta_1(t)$, 
so that the the relaxed control problem \eqref{eq:SysPDErelaxed} contains only a single control function $\beta(t)=\beta_1(t)$. Moreover, 
using that we have $\beta(t)(\xi-g(\eta,1))+(1-\beta(t))(\xi-g(\eta,2))=-\beta(t)\eta^2+\frac12 y^2 + \xi$, the relaxed problem 
\eqref{eq:SysPDErelaxed} reads for this example
\begin{equation}\label{eq:burgersrelaxrelax}
\begin{aligned}
   &  \eta_t + \xi_x = 0,\\
   &  \xi_t + a^2 \eta_x = \kappa^{-1}(\beta(t)\eta^2-{\textstyle \frac12} y^2 - \xi).
\end{aligned}
\end{equation}
The optimization problem that determines $\beta$ in Proposition~\ref{prop:relgap} is
\begin{equation}\label{eq:burgersrelaxrelaxocp}
\begin{aligned}
  \min~J(\eta) &\quad \text{subject to}\\
   &  \eta_t + \xi_x = 0,\\
   &  \xi_t + a^2 \eta_x = \kappa^{-1}(\beta(t)\eta^2- {\textstyle \frac12} y^2 - \xi)\\
   &  \eta(0,x)=\eta_0(x),~\xi(0,x)=0,~\eta(t,0)=\eta(t,L),~\xi(t,0)=\xi(t,L)\\
   &  \beta(t) \in [0,1].
\end{aligned}
\end{equation}
We solve \eqref{eq:burgersrelaxrelaxocp} using an adjoint-equation based gradient decent method. To this end, letting $(p,q)$ being the solution of the 
following adjoint equations
\begin{equation}\label{eq:burgersrelaxadjoint}
\begin{aligned}
   &  -p_t - a^2 q_x = \kappa^{-1} q (2\beta(t)-1)\eta,\\
   &  -q_t - p_x = -\kappa^{-1}q\\
   &  p(T,x)=-(\eta(T,x)-\bar{\eta(x)}),~q(T,x)=0,~p(t,L)=p(t,0),~q(t,L)=q(t,0),
\end{aligned}
\end{equation}
the derivative of the reduced cost function $\tilde J(\beta)=J(\eta(\beta))$ can be obtained as
\begin{equation}\label{eq:costtrackGrad}
 \tilde J'(\beta)=-\int_0^L \frac{q}{\kappa}[g(\eta,2)-g(\eta,1)]\,dx = \int_0^L \frac{q}{\kappa} \eta^2\,dx.
\end{equation}
As in \cite{JinXin1995,BandaHerty2009}, we use a first order finite-volume in space and implicit Euler in time splitting scheme 
for the discretization of \eqref{eq:burgersrelaxrelax} and \eqref{eq:burgersrelaxadjoint}. The integrals in \eqref{eq:costtrack} 
and \eqref{eq:costtrackGrad} are evaluated using the trapezoidal rule.

As a test problem, we consider $L=2\pi$, $T=3$, $\eta_0(x)=2\chi_{(\frac{L}{4},\frac{3L}{4})}(x)$, $x \in [0,L]$, 
with $\chi_\gamma$ denoting the characteristic function of the set $\gamma$ in $[0,L]$, and $\bar{\eta}(x)=1-\sin(x)$, $x \in [0,L]$. 
We discretize the space domain $[0,L]$ by $N_x=300$ cells, set $a = 5$, $\kappa = 1.0e{-}08$ and choose a CFL consistent time 
discretization step size with the CFL constant $\nicefrac{1}{2}$ for the time interval $[0,T]$. The discretization level corresponds
to a MINLP with 4.276.800 unknown real and 7.138 unkown binary variables.

With the above adjoint-based approach, we could easily computed a piecewise constant $\beta^*$ with $\tilde J(\beta^*)=0.086$ up to a 
first order optimality of $1.0e{-}08$ using an interior point method. The numerical results corresponding to Proposition~\ref{prop:relgap} 
are reported in Table~\ref{table:results}. The computed controls, the initial state and the corresponding final time states are plotted 
in Figure~\ref{fig:results1} and~\ref{fig:results2}.

In this example, we observe numerically that the relaxed problem exhibits very nice bang-bang structure. This structure is captured 
by the rounding strategy of Lemma~\ref{lem:SUR} for $\Delta t=0.25$ or smaller. We have observed similar results for other initial data 
and control targets. Hence, for the application to flux switching control of the form \eqref{eq:burgersrelax}, the relaxation method 
shows even better convergence properties than those predicted in Proposition~\ref{prop:relgap}. In particular, in this example, we do 
not observe frequent switching in the optimal integer control. This motivates even further investigation of the structure of solutions
to hyperbolic mixed-integer optimal control problems.

\begin{table}
\begin{center}
\begin{tabular}{c|c|c|c|c}
 $k$ & $\Delta t^k$ & $J(v^k)$ & $|J^* - J(v^k)|$ & $\nicefrac{|J^* - J(v^k)|}{J^*}$\\ \hline
 1 & 1.00 & 0.467 & 0.381 & 4.44 \\
 2 & 0.50 &  0.396 & 0.310 & 3.62\\
 3 & 0.25 & 0.086 & 0.000 & 0.00 \\ 
 4 & 0.125 & 0.086 & 0.000 & 0.00 \\ 
 5 & 0.0075 & 0.086 & 0.000 & 0.00  
\end{tabular}
\end{center}
\caption{Numerical results for the test problem on flux switching control for conservation laws discussed in Section~\ref{sec:example}. \label{table:results}}
\end{table}

\begin{figure}
\begin{center}
\includegraphics[width=1.0\textwidth]{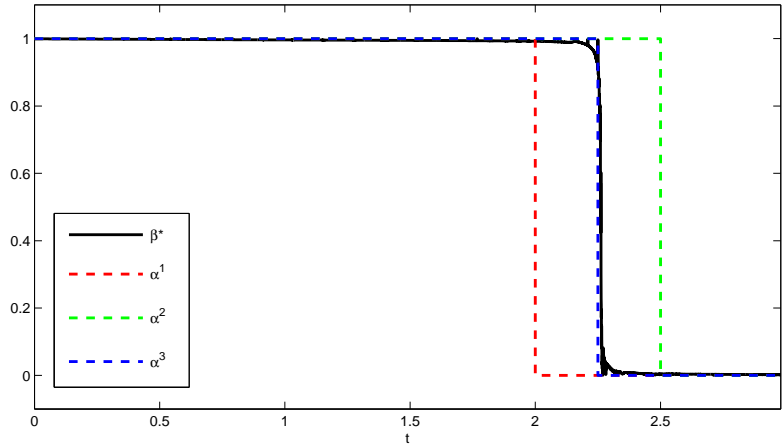}
\end{center}
\caption{Optimal control for the relaxed problem and the integer control obtained by sum up rounding strategies for different $\Delta t$ for the test problem in Section~\ref{sec:example}. \label{fig:results1}}
\end{figure}

\begin{figure}
\begin{center}
\includegraphics[width=1.0\textwidth]{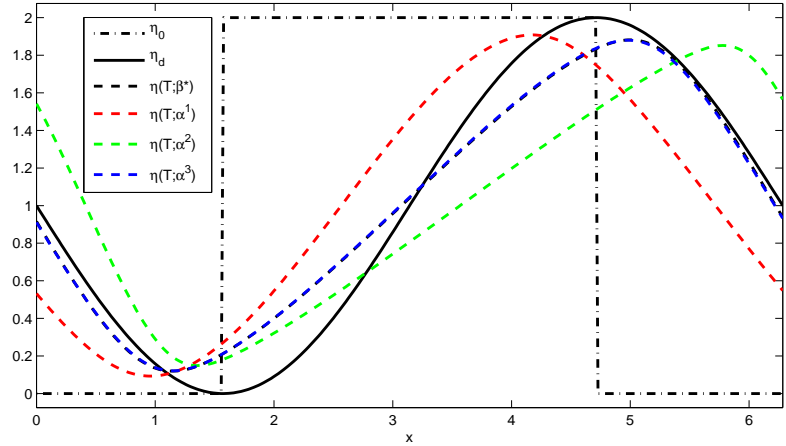}
\end{center}
\caption{Initial data, the desired final time state and finial time plots of the solutions for the relaxed control and different integer controls for the test problem in Section~\ref{sec:example}. \label{fig:results2}}
\end{figure}

\section{Conclusions}\label{sec:conclusion}
Our analysis and numerical results show that certain PDE mixed-integer optimal control problems of hyperbolic type can be solved successfully und very 
efficiently using methods based on relaxation and rounding strategies. 

\section*{Acknowledgments}
This work was supported by the DFG grant CRC/Transregio 154, project A03.



\end{document}